\def\th@plain{%
  \upshape 
}
\renewenvironment{proof}[1][\proofname]{\par
  \pushQED{\qed}%
  \normalfont \topsep6\p@\@plus6\p@\relax
  \trivlist
  \item[\hskip\labelsep
        \bfseries
    #1\@addpunct{.}]\ignorespaces
}{%
  \popQED\endtrivlist\@endpefalse
}
\numberwithin{equation}{section}
\newtheorem{thm}{Theorem}
\numberwithin{equation}{section}
\numberwithin{equation}{section}
\begin{document}

\title{\LARGE A note on the edge partition of graphs containing either a light edge or an alternating 2-cycle\thanks{This is a very preliminary version! If you find any topes or
  mistakes, please fell free to let us now (contact at xzhang@xidian.edu.cn or beiniu@stu.xidian.edu.cn). This paper is used for
  communication, and will not be published as it is in a journal.}
}
\author{ Xin Zhang, ~~Bei Niu      \\
{\small School of Mathematics and Statistics, Xidian University, Xi'an, 710071, China}}
%
\maketitle
\begin{abstract}\baselineskip 0.46cm
Let $\mathcal{G}_{\alpha}$ be a hereditary graph class (i.e, every subgraph of $G_{\alpha}\in \mathcal{G}_{\alpha}$ belongs to $\mathcal{G}_{\alpha}$) such that every graph $G_{\alpha}$ in $\mathcal{G}_{\alpha}$ has minimum degree at most 1, or contains either an edge $uv$ such that $d_{G_{\alpha}}(u)+d_{G_{\alpha}}(v)\leq \alpha$ or a 2-alternating cycle. It is proved that every graph in $\mathcal{G}_{\alpha}$ ($\alpha\geq 5$) with maximum degree $\Delta$ can be edge-partitioned into two forests $F_1$, $F_2$ and a subgraph $H$ such that $\Delta(F_i)\leq \max\{2,\lceil\frac{\Delta-\alpha+6}{2}\rceil\}$ for $i=1,2$ and $\Delta(H)\leq \alpha-5$.
%
\end{abstract}
\baselineskip 0.46cm

An edge partition of an undirected graph $G$ is a partition of its set of edges into some disjoint subsets so that their union is the set of edges of $G$. For example, an edge $k$-coloring of a graph $G$ is an edge partition $E(G)=E(G_1)\cup E(G_2)\cup \ldots \cup E(G_k)$ such that each $E(G_i)$ with $1\leq i\leq k$ induces a match.

Sometimes, when considering the edge partition problem of a graph $G$, we are to search disjoint subsets $E(G_1),E(G_2), \ldots ,E(G_k)$ such that $E(G)=E(G_1)\cup E(G_2)\cup \ldots \cup E(G_k)$, but $E(G_i)$ and $E(G_j)$ with some $1\leq i<j\leq k$ may induce different subgraphs. For example, Wang, Hu and Wang \cite{WHW} showed that

\begin{thm}\cite[Wang, Hu, Wang]{WHW}\label{aaa}
Every planar graph $G$ with maximum degree $\Delta$ can be edge-partitioned into two forests $F_1$, $F_2$ and a subgraph $H$ such that $\Delta(F_i)\leq \max\{2,\lceil\frac{\Delta-9}{2}\rceil\}$ for $i=1,2$ and $\Delta(H)\leq 10$.
\end{thm}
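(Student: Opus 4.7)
I would argue by induction on $|E(G)|$ using the classical structural theorem for planar graphs: every simple planar graph with minimum degree at least $2$ contains either an edge $uv$ with $d(u)+d(v)\le 15$ or a 2-alternating cycle (a cycle $v_1v_2\cdots v_{2k}v_1$ in which every $v_{2i}$ has degree $2$). Set $t:=\max\{2,\lceil(\Delta-9)/2\rceil\}$ and take a minimum counterexample $G$. If $\delta(G)\le 1$, remove a pendant (or isolated) edge $e$, apply the induction hypothesis to $G-e$, and reinsert $e$ into whichever of $F_1,F_2,H$ still has room at its non-leaf endpoint; this is always possible because only one edge is being added there. Otherwise the structural theorem applies.

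\textbf{Light edge case.} If a light edge $uv$ with $d(u)+d(v)\le 15$ exists, delete $uv$, apply induction to $G-uv$, and then try to reinsert $uv$ into $F_1$, $F_2$, or $H$. With $a_i:=d_{F_i}(u)$, $b_i:=d_{F_i}(v)$, $c:=d_H(u)$, $d:=d_H(v)$, we have $a_1+a_2+c+b_1+b_2+d\le 13$, whereas the obstruction thresholds $\{a_i\ge t\},\{b_i\ge t\},\{c\ge 10\},\{d\ge 10\}$ have total triggering value at least $2(2t+10)\ge 24$. A short case check exploits this slack to exhibit a target subgraph with room at both endpoints. Acyclicity in the chosen $F_i$ is either automatic (the endpoints lie in distinct components) or can be restored by swapping one edge of the alternative forest into $H$.

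\textbf{2-alternating cycle case.} If a 2-alternating cycle $C=v_1v_2\cdots v_{2k}v_1$ is present, delete $E(C)$ and induct on $G-E(C)$. The even-indexed vertices become isolated, so reinserting $v_{2i-1}v_{2i}$ into $F_1$ and $v_{2i}v_{2i+1}$ into $F_2$ cannot close a cycle through any $v_{2i}$, and it raises the degree of each $v_{2i-1}$ in $F_1$ and in $F_2$ by exactly one. Provided $d_{F_j}(v_{2i-1})<t$ for $j=1,2$ at every odd vertex, we are done; otherwise we exchange the labels $F_1\leftrightarrow F_2$ along an arc of $C$ (flipping parity at the congested vertex) or relocate one offending edge of $v_{2i-1}$ into $H$, again using the degree-sum slack.

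\textbf{Main obstacle.} The delicate part is the degree bookkeeping at the reinsertion step: after induction, we must guarantee that the forests $F_1,F_2$ still admit the returning edges without exceeding $t$ and without closing a cycle, and in the cycle case that the alternating assignment can be locally perturbed to absorb any accidental congestion at an odd vertex. This is exactly where the specific thresholds $\alpha=15$ and $\Delta(H)\le 10$ enter: they provide just enough room in the degree budget $2(2t+10)$ to dominate the worst-case distribution of obstructions coming from the $\le 13$ previously-partitioned incidences.
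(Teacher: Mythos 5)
There is a genuine gap, and it sits exactly where the paper's proof does something you did not anticipate: the 2-alternating cycle case. You induct on the \emph{global} statement with the fixed threshold $t=\max\{2,\lceil(\Delta-9)/2\rceil\}$. After deleting $E(C)$ and applying the inductive hypothesis to $G-E(C)$, all you know at an odd vertex $v_{2i-1}$ is $d_{F'_j}(v_{2i-1})\le t$ for $j=1,2$; but the alternating reinsertion adds one edge to \emph{each} forest at every odd vertex, so you need $d_{F'_j}(v_{2i-1})\le t-1$ for both $j$, which the global hypothesis does not supply. A concrete bad configuration: $\Delta=13$, $t=2$, $d_G(v_{2i-1})=13$, and the inductive partition returns $d_{F'_1}(v_{2i-1})=d_{F'_2}(v_{2i-1})=2$, $d_{H'}(v_{2i-1})=7$. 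Your two repairs do not rescue this. Relabelling $F_1\leftrightarrow F_2$ along an arc of $C$ is vacuous at odd vertices: the two cycle edges at $v_{2i-1}$ go one to each forest under any alternating assignment, so each forest degree still rises by one there (and breaking the alternation forces both edges into one forest, which is worse and also breaks it at another vertex). Relocating an ``offending'' edge of $F'_1$ at $v_{2i-1}$ into $H$ requires $d_{H'}\le 9$ at \emph{both} endpoints of that edge, which is not guaranteed. The paper closes this exact hole by proving the local strengthening (its Theorem 4): $d_{F_i}(v)\le\max\{2,\lceil(d_G(v)-\alpha+6)/2\rceil\}$ for every vertex $v$. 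Since $d_{G-E(C)}(v_{2i-1})=d_G(v_{2i-1})-2$ and this quantity is at least $\alpha-3$, the local bound in $G-E(C)$ is exactly one less than the local bound in $G$, so the reinsertion is automatically legal. Strengthening the induction hypothesis is the essential idea your proposal is missing; without it the induction does not close.

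Two smaller points. First, your budget comparison in the light-edge case is miscounted: blocking all three targets requires only one endpoint per target to be at threshold, so the relevant inequality is $2t+10\ge 14>13$, not $2(2t+10)\ge 24$ versus $13$; the conclusion survives, but the argument as written compares the wrong quantities. Second, your acyclicity fix (``swap one edge of the alternative forest into $H$'') can itself violate $\Delta(H)\le 10$; the paper instead always inserts the returning edge into a forest in which one endpoint has degree $0$, so that endpoint becomes a leaf and no cycle can arise. That discipline is worth adopting rather than an ad hoc swap.
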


Similar result was established by Wang \cite{W} for planar graph with more restrictions.

\begin{thm}\cite[Wang]{W}\label{bbb}
Every planar graph without 3-cycle normally adjacent to a 4-cycle and with maximum degree $\Delta$ can be edge-partitioned into two forests $F_1$, $F_2$ and a subgraph $H$ such that $\Delta(F_i)\leq \max\{2,\lceil\frac{\Delta-4}{2}\rceil\}$ for $i=1,2$ and $\Delta(H)\leq 5$.
\end{thm}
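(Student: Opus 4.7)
The plan is a minimum-counterexample argument combining a discharging-based reducibility lemma with an extension step. Let $G$ be a counterexample to Theorem~\ref{bbb} with $|V(G)|+|E(G)|$ minimum, and let $\mathcal{H}$ denote the class of planar graphs with no $3$-cycle normally adjacent to a $4$-cycle; since $\mathcal{H}$ is closed under deleting edges and vertices, any smaller graph we produce still lies in $\mathcal{H}$. Observe also that the target parameters align with the abstract scheme at $\alpha=10$, since $\lceil(\Delta-\alpha+6)/2\rceil=\lceil(\Delta-4)/2\rceil$ and $\alpha-5=5$.

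The first and more substantial step is a structural lemma: every $G\in\mathcal{H}$ with $\delta(G)\ge 2$ contains either an edge $uv$ with $d(u)+d(v)\le 10$ or a $2$-alternating cycle (a cycle whose odd-indexed vertices have degree $2$). I would prove this by discharging on such a $G$ assumed to contain neither configuration. Assign initial charge $\mu(v)=d(v)-4$ to vertices and $\mu(f)=d(f)-4$ to faces, summing to $-8$ by Euler's formula. The absence of a light edge forces every vertex of degree $2$, $3$, $4$ to have all neighbors of suitably large degree; the absence of a $2$-alternating cycle prevents two $2$-vertices from sharing a common neighbor in the prohibited way; and the forbidden $3$-to-$4$ edge-adjacency ensures that every triangular face is flanked along each of its three edges by a face of length $\ge 5$. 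I would then design discharging rules moving charge from large faces and from high-degree vertices into $3$-faces and low-degree vertices, and verify in each local case that the final charge is nonnegative, contradicting the negative global total.

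Given the structural lemma, the induction step is largely routine. If $\delta(G)\le 1$, delete a minimum-degree vertex, apply induction, and reinsert its (at most one) edge freely. If there is a light edge $uv$, delete the lower-degree endpoint, apply induction to the remainder (still in $\mathcal{H}$), and reinsert the deleted edges one at a time: the combined capacity $2\lceil(\Delta-4)/2\rceil+5$ of $F_1,F_2,H$ at the surviving endpoints, together with $d(u)+d(v)\le 10$, leaves enough room to avoid exceeding any of the three degree caps while preserving acyclicity of $F_1,F_2$. If there is a $2$-alternating cycle, strip its degree-$2$ vertices, apply induction, and reconstruct by sending the cycle edges alternately into $F_1$ and $F_2$, using that each degree-$2$ vertex contributes only two edges.

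The main obstacle is the discharging step: tuning the rules to exploit the ``normally adjacent'' exclusion tightly enough to land at the threshold $10$ rather than something weaker. The hardest configuration I anticipate is a $3$-face whose three vertices all have the smallest degree permitted by the no-light-edge assumption, where the needed charge must come almost entirely from the three adjacent $\ge 5$-faces, and it is precisely the $3$-versus-$4$ separation that certifies those faces have length at least $5$. A secondary delicate case is a degree-$2$ vertex whose two neighbors have degree exactly $9$, right at the boundary of the light-edge inequality; here the non-existence of a $2$-alternating cycle must be invoked to prevent several such $2$-vertices from clustering around the same pair of $9$-vertices and draining them of charge.
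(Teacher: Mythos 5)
Your overall architecture is the right one and matches the paper's: the theorem is deduced from the structural fact (due to Wang) that every planar graph in this class with minimum degree at least $2$ contains an edge of weight at most $10$ or a $2$-alternating cycle, followed by a minimum-counterexample reduction; this is exactly the paper's Theorem~\ref{ccc} specialized to $\alpha=10$. But as written there are two genuine gaps. First, the structural lemma is the substantial combinatorial content here, and you have only announced a discharging plan ($\mu(v)=d(v)-4$, $\mu(f)=d(f)-4$, ``design rules and verify each local case'') without exhibiting the rules or the case analysis; nothing in the proposal certifies that the threshold $10$ is actually reachable, and you yourself flag this as the main obstacle. A proof that defers its hardest lemma to an unexecuted verification is not yet a proof (the paper sidesteps this by citing the lemma from Wang's article and contributing only the reduction).

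Second, and more concretely, the reduction step fails if you carry only the global bound $\Delta(F_i)\leq\max\{2,\lceil\frac{\Delta-4}{2}\rceil\}$ through the recursion. In the $2$-alternating-cycle case each high-degree cycle vertex $v$ must absorb one new edge into $F_1$ and one into $F_2$; the inductive hypothesis on $G'=G-E(C)$ only gives $d_{F'_i}(v)\leq\max\{2,\lceil\frac{\Delta-4}{2}\rceil\}$, which can already be tight in both forests. For instance with $\Delta=10$ and $d_G(v)=10$ one may have $d_{F'_1}(v)=d_{F'_2}(v)=3$ and $d_{H'}(v)=2$: the slack in your capacity count $2\lceil\frac{\Delta-4}{2}\rceil+5$ sits entirely in $H$, and adding a cycle edge to either forest at $v$ breaks the cap. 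The remedy, which is what the paper actually proves (Theorem~\ref{ddd}), is the vertex-local strengthening $d_{F_i}(v)\leq\max\{2,\lceil\frac{d_G(v)-4}{2}\rceil\}$; since $d_{G'}(v)=d_G(v)-2$ on the cycle and $d_G(v)\geq 9$ there (else a light edge exists), one gets $d_{F'_i}(v)+1\leq\lceil\frac{d_{G'}(v)-4}{2}\rceil+1=\lceil\frac{d_G(v)-4}{2}\rceil$ exactly. You need this strengthening or an equivalent device. The light-edge case has a related soft spot: you delete the whole low-degree endpoint and reinsert up to five edges ``one at a time,'' but your capacity count only guarantees room at the surviving neighbors, not at $u$ itself, and says nothing about keeping $F_1,F_2$ acyclic when several reinserted edges at $u$ are forced into the same forest; deleting the single edge $uv$, as the paper does, avoids both issues.
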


The proofs of Theorems \ref{aaa} and \ref{bbb} are both based on the structure of the considered graph. Precisely, the proof of Theorem \ref{aaa} uses the fact that every planar graph $G$ with minimum degree at least 2 contains either an edge $uv$ such that $d_G(u)+d_G(v)\leq 15$ or a 2-alternating cycle, that is, a cycle $v_{0}v_{1}~\cdots v_{2n-1}v_0$ such that $d_G(v_{0})=d_G(v_{2})=\cdots=d_G(v_{2n-2})=2$, and the proof of Theorem \ref{bbb} involves the fact that every planar graph $G$ without 3-cycle normally adjacent to a 4-cycle and  with minimum degree at least 2 contains either an edge $uv$ such that $d_G(u)+d_G(v)\leq 10$ or a 2-alternating cycle. Clearly, we can feel at this stage that the proofs of Theorems \ref{aaa} and \ref{bbb} may be similar, but they are  full-length-proved in different papers.

The aim of this note is to establish a common tool that can be used to prove similar theorems as Theorems \ref{aaa} and \ref{bbb}.

Let $\mathcal{G}_{\alpha}$ be a hereditary graph class (i.e, every subgraph of $G_{\alpha}\in \mathcal{G}_{\alpha}$ belongs to $\mathcal{G}_{\alpha}$) such that every graph $G_{\alpha}\in \mathcal{G}_{\alpha}$ has minimum degree at most 1, or contains either an edge $uv$ such that $d_{G_{\alpha}}(u)+d_{G_{\alpha}}(v)\leq \alpha$ or a 2-alternating cycle. What can we say about the graph class $\mathcal{G}_{\alpha}$? In the following, we conclude the following theorem that implies Theorems \ref{aaa} and \ref{bbb} (taking $\alpha=15$ and $\alpha=10$, respectively).

\begin{thm}\label{ccc}
Every graph in $\mathcal{G}_{\alpha}$ ($\alpha\geq 5$) with maximum degree $\Delta$ can be edge-partitioned into two forests $F_1$, $F_2$ and a subgraph $H$ such that $\Delta(F_i)\leq \max\{2,\lceil\frac{\Delta-\alpha+6}{2}\rceil\}$ for $i=1,2$ and $\Delta(H)\leq \alpha-5$.
\end{thm}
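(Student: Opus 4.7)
The natural framework is induction on $|V(G)|+|E(G)|$. Set $k=\max\{2,\lceil(\Delta-\alpha+6)/2\rceil\}$ and $h=\alpha-5$. A short split on whether $\Delta\leq\alpha-2$ (so $k=2$) or $\Delta\geq\alpha-1$ yields the master inequality $2k+h\geq\Delta+1$, and in particular $2k+h\geq\alpha-1$; both are used repeatedly. Let $G\in\mathcal{G}_\alpha$ be a smallest counterexample. By the definition of $\mathcal{G}_\alpha$ one of three reducible configurations appears in $G$, and I would treat them in turn, in each instance invoking the induction hypothesis on a smaller member of $\mathcal{G}_\alpha$ (available by heredity) and then re-inserting the deleted structure into the partition $F_1,F_2,H$ returned by induction.

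The easy case is a vertex $v$ with $d_G(v)\leq 1$: remove $v$, inductively partition $G-v$, then place its (at most one) pendant edge in any slot that is not at its degree cap at the unique neighbor $u$ of $v$. Since $d_{G-v}(u)\leq\Delta-1$ while the total capacity at $u$ equals $2k+h\geq\Delta+1$, at least two units of slack remain, and because $v$ is reintroduced as a leaf no forest cycle can be created.

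For the second case, $G$ contains an edge $uv$ with $d_G(u)+d_G(v)\leq\alpha$. I would apply induction to $G-uv$ and call a slot \emph{blocked} if either $u$ or $v$ already sits at its degree cap in that slot: a blocked $F_i$ contributes at least $k$, and a blocked $H$ contributes at least $h$, to the sum $d_{G-uv}(u)+d_{G-uv}(v)\leq\alpha-2$. If all three slots were blocked we would get $2k+h\leq\alpha-2$, contradicting $2k+h\geq\alpha-1$; hence some slot admits the edge $uv$ degree-wise. When that slot is $H$ we insert $uv$ immediately. The residual and, I expect, hardest subcase is when the only slots with spare capacity at both endpoints are forests in which $u$ and $v$ already lie in the same component; here I would execute a Kempe-chain-style swap along the unique path from $u$ to $v$ in such a forest, recoloring a carefully chosen edge of that path into $H$ or into the other forest, again invoking the master inequality to certify that at least one such move exists without violating a cap or creating a second cycle. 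If the swap cascades, one iterates while checking that the process terminates.

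For the third case, $G$ contains a 2-alternating cycle $C=v_0v_1\cdots v_{2n-1}v_0$ with each $v_{2i}$ of degree $2$. Delete the even-indexed vertices, partition the remaining graph by induction, and reintroduce the $2n$ deleted edges. The basic plan is the \emph{alternating} assignment sending $v_{2i-1}v_{2i}\mapsto F_1$ and $v_{2i}v_{2i+1}\mapsto F_2$: each new vertex $v_{2i}$ becomes a leaf of both $F_1$ and $F_2$ (so no cycle can be created), and each odd vertex $v_{2i+1}$ receives exactly one new edge in each of $F_1$ and $F_2$. The master inequality supplies the required slack at odd vertices generically; in the tight subcase where the $F_j$-degree of some $v_{2i+1}$ is already $k$, I would reroute the offending edge to $H$ (possible as soon as $\alpha\geq 6$) or propagate a local flip of the alternating pattern along an arc of $C$ to bypass the tight vertex. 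The boundary case $\alpha=5$, in which $h=0$ forces every edge into a forest, is a delicate instance that parallels the main obstacle of Case 2.
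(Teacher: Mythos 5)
Your overall plan (minimal counterexample, three reducible configurations, reinsertion) matches the paper's, but you induct on the \emph{global} statement with the uniform cap $k=\max\{2,\lceil(\Delta-\alpha+6)/2\rceil\}$, and this is exactly where the argument breaks. The paper instead proves the stronger local statement (its Theorem \ref{ddd}): $d_{F_i}(v)\leq\max\{2,\lceil\frac{d_G(v)-\alpha+6}{2}\rceil\}$ for each vertex $v$. The difference is decisive in Case 3. There every odd vertex $v_{2i+1}$ must absorb one new edge in \emph{each} forest, so you need $d_{F'_j}(v_{2i+1})\leq k-1$ for $j=1,2$; but your induction hypothesis only gives $d_{F'_j}(v_{2i+1})\leq k$ (the maximum degree of $G-E(C)$ may still equal $\Delta$, realized off the cycle). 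Your two patches do not close this: rerouting to $H$ needs $d_{H'}(v_{2i+1})<h$, which is not guaranteed (and is impossible when $\alpha=5$), and a ``local flip'' of the alternating pattern only exchanges which forest receives which edge, so it cannot help when $v_{2i+1}$ is at cap $k$ in \emph{both} forests --- a configuration that your constraints do not exclude once $\alpha\geq 8$ (take $d_{H'}(v_{2i+1})=0$ and $d_{F'_1}=d_{F'_2}=k$). In the paper's local version the cap at $v_{2i+1}$ computed in $G'=G-E(C)$ is automatically one less than the cap in $G$, because $d_{G'}(v_{2i+1})=d_G(v_{2i+1})-2\geq\alpha-3$ forces the cap to be the ceiling term, and the ceiling increases by exactly $1$ when the degree increases by $2$. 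That strengthening is the whole point of the note, and your proposal is missing it.

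Case 2 also contains a gap, though a repairable one. Your blocking count correctly shows some slot has spare capacity at both endpoints, but when that slot is a forest containing $u$ and $v$ in one component you invoke an unspecified, possibly cascading Kempe-type swap with no termination or correctness argument. The swap is in fact unnecessary: if $H$ is blocked, i.e.\ $\max\{d_{H'}(u),d_{H'}(v)\}=h=\alpha-5$, then $d_{G'}(u)+d_{G'}(v)\leq\alpha-2$ forces $d_{F'_1}(u)+d_{F'_2}(u)+d_{F'_1}(v)+d_{F'_2}(v)\leq 3$, so some forest $F_i$ has an endpoint of $uv$ of degree $0$; inserting $uv$ there creates a pendant edge (hence no cycle) and raises the other endpoint's $F_i$-degree to at most $2$, which is within the cap. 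This is precisely the paper's Case 2, organized as a short case split on $d_G(u)\in\{2,3,\geq 4\}$; every insertion it performs makes one endpoint a leaf of the target forest, so acyclicity is never in doubt. Case 1 of your proposal is fine. As written, however, the proof does not go through, and the fix is not a local repair but the reformulation of the induction hypothesis in terms of $d_G(v)$ rather than $\Delta$.
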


In the following, we prove a slightly stronger result than Theorem \ref{ccc}.

\begin{thm}\label{ddd}
Every graph $G$ in $\mathcal{G}_{\alpha}$ ($\alpha\geq 5$) can be edge-partitioned into two forests $F_1$, $F_2$ and a subgraph $H$ such that $d_{F_i}(v)\leq \max\{2,\lceil\frac{d_{G}(v)-\alpha+6}{2}\rceil\}$ for every $v\in F_i$ with $i=1,2$  and $\Delta(H)\leq \alpha-5$.
\end{thm}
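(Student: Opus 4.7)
I plan to prove Theorem~\ref{ddd} by induction on $|E(G)|$. The base case $|E(G)|=0$ is trivial. For the inductive step, take $G\in\mathcal{G}_\alpha$ with $|E(G)|\geq 1$. Since $\mathcal{G}_\alpha$ is hereditary, any subgraph of $G$ also lies in $\mathcal{G}_\alpha$ and is eligible for the induction hypothesis. The defining property of $\mathcal{G}_\alpha$ forces $G$ to contain at least one of the following reducible configurations: (A) a vertex $v$ with $d_G(v)\leq 1$; (B) a light edge $uv$ with $d_G(u)+d_G(v)\leq\alpha$; or (C) a 2-alternating cycle $v_0 v_1 \cdots v_{2n-1} v_0$ with $d_G(v_{2i})=2$ for every $i$.

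The numerical engine behind all three reductions is the budget inequality
\[
2\max\{2,\,\lceil (d-\alpha+6)/2\rceil\}+(\alpha-5)\geq d+1 \quad \text{for every integer } d\geq 0,
\]
which says that at a vertex of degree $d$ the three classes $F_1,F_2,H$ jointly accommodate strictly more than $d$ incident edges, so there is always at least one free slot somewhere.

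In case (A), I delete $v$, apply induction to $G-v$, and reinsert the (at most one) edge at $v$ into any class with a free slot at its other endpoint; the inequality guarantees such a slot exists, and the bound at $v$ is automatic. In case (B), I delete the lighter endpoint $v$ (WLOG $d_G(v)\leq d_G(u)$, so $d_G(v)\leq\alpha/2$) and induct on $G-v$. Because $d_G(v)\leq\alpha/2$ and $\alpha\geq 5$, the bound for $v$ collapses to $2$ per forest, so $v$'s total budget $\alpha-1\geq d_G(v)$ is sufficient. I then reinsert $v$'s edges one at a time: at each neighbor $w$ the inequality supplies a free class, and a conflict caused by acyclicity of $F_i$ is resolved by relocating an existing $F_j$-edge at $w$ into $H$, which has strict slack at $w$ by the same inequality. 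For case (C), I delete the even-indexed degree-$2$ vertices $v_0,v_2,\ldots,v_{2n-2}$ and induct on the remainder; each reinserted $v_{2i}$ brings only two edges and has per-forest cap $2$, so the only residual work occurs at the odd-indexed $v_{2i+1}$, where the same swap-into-$H$ technique succeeds.

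The principal obstacle is case (B): maintaining the three invariants---acyclicity of $F_1,F_2$, the refined degree bound $d_{F_i}(w)\leq\max\{2,\lceil (d_G(w)-\alpha+6)/2\rceil\}$, and $\Delta(H)\leq\alpha-5$---requires a careful swap argument together with a short case analysis on the parity of $d_G(w)-\alpha$, since the per-forest bound increases by $0$ or $1$ when one edge is added. The budget inequality is engineered precisely to provide the slack that makes all such swaps feasible, which is also why the constants in Theorem~\ref{ddd} take the form they do.
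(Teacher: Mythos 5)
Your overall architecture (induction on $|E(G)|$ driven by the three configurations forced by membership in $\mathcal{G}_\alpha$) coincides with the paper's, your budget inequality is correct, and your case (A) is sound. The gap is in case (B), and it is a real one. You delete the \emph{vertex} $v$ and must reinsert up to $\lfloor\alpha/2\rfloor$ edges, whereas the paper deletes only the single edge $uv$. Your budget inequality guarantees, at each neighbour $w$ of $v$, only that the \emph{total} capacity of the three classes exceeds the number of edges already placed at $w$; it says nothing about which class carries the slack. In particular, your claim that $H$ ``has strict slack at $w$'' is false in general: one can have $d_{H'}(w)=\alpha-5$ with all the slack sitting in the forests, so the proposed relocation of a forest edge into $H$ is blocked. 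Worse, the free classes at the various neighbours $w_1,\dots,w_{d_G(v)}$ may all coincide, say all equal to $F_1$; then you would have to place more than $2$ of $v$'s edges into $F_1$, violating $v$'s own per-forest cap of $2$, and no single-edge swap at a neighbour repairs this. Finally, moving one edge at $w$ out of $F_i$ does not in general destroy the $F_i$-path between two neighbours of $v$ that threatens acyclicity. None of these difficulties arises in the paper's Case 2: after removing just the edge $uv$, the lightness condition $d_G(u)+d_G(v)\leq\alpha$ forces the heavy endpoint (the one with $d_{H'}=\alpha-5$) to satisfy $d_{F'_1}+d_{F'_2}\leq 2$, and a short case analysis on $d_G(u)\in\{2,3,\geq 4\}$ places that one edge.

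In case (C) you again lean on the same unjustified swap at the odd-indexed vertices. The paper instead observes that, because case (B) does not apply, every odd-indexed vertex satisfies $d_G(v_{2i+1})\geq\alpha-1$, so its per-forest cap equals the ceiling term and increases by exactly $1$ when its degree increases by $2$; assigning the cycle edges alternately to $F_1$ and $F_2$ then respects all caps with no swapping, and acyclicity is automatic because every added edge is pendant at a degree-$2$ cycle vertex. I recommend replacing your case (B) by an edge deletion and your case (C) by this alternating assignment; as written, the proposal does not close.
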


\begin{proof}
Let $G$ be a minimum counterexample to Theorem \ref{ddd}. Clearly, $G$ is connected.
If $\Delta(G)\leq \alpha-5$, then take $H=G$ and $F_{1}=F_{2}=\emptyset$ and we are done. Hence we assume $\Delta(G)\geq \alpha+2$.

\textbf{Case 1.} \emph{$G$ contains a vertex $u$ of degree at most 1.}

Since $G$ is connected, $d_G(u)=1$. Let $uv\in E(G)$ and let $G'=G-u$. By the minimality of $G$, $G'$ has an edge-partition into two forests $F_{1}^{'},F^{'}_{2}$ and a subgraph $H'$ such that $\Delta(H')\leq \alpha-5$ and $d_{F'_{i}}(v)\leq \max\{2,\lceil\frac{d_{G'}(v)-\alpha+6}{2}\rceil\}$ for any $v\in V(F_i)$ with $i=1,2$.

If $d_{H'}(v)\leq \alpha-6$, then let $H=H'+uv, F_1=F'_1$ and $F_2=F'_2$. Since $d_H(u)=1$, $d_{H}(v)=d_{H'}(v)+1\leq \alpha-5$, and $d_H(w)=d_{H'}(w)\leq \Delta(H')\leq \alpha-5$ for any $w\in V(H')\setminus \{v\}$, we have $\Delta(H)\leq \alpha-5$. On the other hand, it is clear that for any $v\in F_i$ with $i=1,2$, $d_{F_{i}}(v)=d_{F'_{i}}(v)\leq \max\{2,\lceil\frac{d_{G'}(v)-\alpha+6}{2}\rceil\}\leq \max\{2,\lceil\frac{d_{G}(v)-\alpha+6}{2}\rceil\}$.

If $d_{H'}(v)\geq \alpha-5$, then $d_{H'}(v)=\alpha-5$ because $\Delta(H')\leq \alpha-5$. Assume, without loss of generality, that $d_{F'_1}(v)\leq d_{F'_2}(v)$.
Since $d_G(v)-1=d_{G'}(v)=d_{F'_1}(v)+d_{F'_2}(v)+d_{H'}(v)=d_{F'_1}(v)+d_{F'_2}(v)+\alpha-5\geq 2d_{F'_1}(v)+\alpha-5$, $d_{F'_1}(v)\leq \lfloor\frac{d_G(v)-\alpha+4}{2}\rfloor$.

Let $H=H', F_1=F'_1+uv$ and $F_2=F'_2$. Clearly, $\Delta(H)=\Delta(H')\leq \alpha-5$ and for any $v\in F_2$, $d_{F_{2}}(v)=d_{F'_{2}}(v)\leq \max\{2,\lceil\frac{d_{G'}(v)-\alpha+6}{2}\rceil\}\leq \max\{2,\lceil\frac{d_{G}(v)-\alpha+6}{2}\rceil\}$. Moreover, $F_1$ is a forest so that $d_{F_1}(u)=1$, $d_{F_1}(v)\leq d_{F'_1}(v)+1\leq \lfloor\frac{d_G(v)-\alpha+4}{2}\rfloor+1\leq \max\{2,\lceil\frac{d_G(v)-\alpha+6}{2}\rceil\}$ and $d_{F_1}(w)= d_{F'_1}(w) \leq \max\{2,\lceil\frac{d_{G'}(w)-\alpha+6}{2}\rceil\}\leq \max\{2,\lceil\frac{d_{G}(w)-\alpha+6}{2}\rceil\}$ for any $w\in V(F'_1)\setminus \{v\}$.

\textbf{Case 2.} \emph{$G$ contains an edge $uv\in E(G)$ with $d_{G}(u)+d_{G}(v)\leq \alpha$.}

Let $G'=G-uv$. By the minimality of $G$, $G'$ has an edge-partition into two forests $F_{1}^{'},F^{'}_{2}$ and a subgraph $H'$ such that $\Delta(H')\leq \alpha-5$ and $d_{F'_{i}}(v)\leq \max\{2,\lceil\frac{d_{G'}(v)-\alpha+6}{2}\rceil\}$ for any $v\in V(F_i)$ with $i=1,2$.

If $\max\{d_{H'}(u),d_{H'}(v)\}\leq \alpha-6$, then let $H=H'+uv, F_1=F'_1$ and $F_2=F'_2$. It is easy to see that $\Delta(H)\leq \alpha-6+1=\alpha-5$ and for any $v\in F_i$ with $i=1,2$, $d_{F_{i}}(v)=d_{F'_{i}}(v)\leq \max\{2,\lceil\frac{d_{G'}(v)-\alpha+6}{2}\rceil\}\leq \max\{2,\lceil\frac{d_{G}(v)-\alpha+6}{2}\rceil\}$.

If $\max\{d_{H'}(u),d_{H'}(v)\}\geq \alpha-5$, then assume, without loss of generality, that $d_{H'}(u)\leq d_{H'}(v)=\alpha-5$, since $\Delta(H')\leq \alpha-5$.
By (a), we assume that $d_G(u)\geq 2$.

If $d_G(u)=2$, then $u$ cannot be incident with both $F'_1$ and $F'_2$. Without loss of generality, assume that $d_{F'_1}(u)=0$ and $d_{F'_2}(u)\leq 1$. Since $d_G(v)\leq \alpha-2$, $\alpha-3\geq d_G(v)-1=d_{G'}(v)=d_{F'_1}(v)+d_{F'_2}(v)+d_{H'}(v)=d_{F'_1}(v)+d_{F'_2}(v)+\alpha-5$, which implies $d_{F'_1}(v)+d_{F'_2}(v)\leq 2$.

\begin{itemize}
  \item If $d_{F'_1}(v)\leq 1$, then let $H=H'$, $F_1=F'_1+uv$ and $F_2=F'_2$. Clearly, $\Delta(H)=\Delta(H')\leq \alpha-5$ and for any $v\in V(F_2)$, $d_{F_{2}}(v)=d_{F'_{2}}(v)\leq \max\{2,\lceil\frac{d_{G'}(v)-\alpha+6}{2}\rceil\}\leq \max\{2,\lceil\frac{d_{G}(v)-\alpha+6}{2}\rceil\}$. Moreover, $F_1$ is a forest so that $d_{F_1}(u)=1$, $d_{F_1}(v)\leq 2$ and $d_{F_1}(w)=d_{F'_1}(w)\leq \max\{2,\lceil\frac{d_{G'}(w)-\alpha+6}{2}\rceil\}\leq \max\{2,\lceil\frac{d_G(w)-\alpha+6}{2}\rceil\}$ for any $w\in V(F'_1)\setminus \{v\}$.
  \item If $d_{F'_1}(v)=2$, then $d_{F'_2}(v)=0$. Let $H=H'$, $F_1=F'_1$ and $F_2=F'_2+uv$. It is easy to see that $\Delta(H)=\Delta(H')\leq \alpha-6$ and $d_{F_{1}}(v)=d_{F'_{1}}(v)\leq \max\{2,\lceil\frac{d_{G'}(v)-\alpha+6}{2}\rceil\}\leq \max\{2,\lceil\frac{d_{G}(v)-\alpha+6}{2}\rceil\}$ for any $v\in V(F_1)$. Moreover, $F_2$ is a forest so that $d_{F_2}(v)=1$, $d_{F_2}(u)\leq 2$ and $d_{F_2}(w)=d_{F'_2}(w)\leq \max\{2,\lceil\frac{d_{G'}(w)-\alpha+6}{2}\rceil\}\leq \max\{2,\lceil\frac{d_G(w)-\alpha+6}{2}\rceil\}$.
\end{itemize}

If $d_G(u)=3$, then $d_G(v)\leq \alpha-3$, which implies $\alpha-4\geq d_G(v)-1=d_{G'}(v)=d_{F'_1}(v)+d_{F'_2}(v)+d_{H'}(v)=d_{F'_1}(v)+d_{F'_2}(v)+\alpha-5$. Hence $d_{F'_1}(v)+d_{F'_2}(v)\leq 1$. Without loss of generality, assume that $d_{F'_1}(v)=0$ and $d_{F'_2}(v)\leq 1$. Since $d_G(u)=3$, $d_{F'_1}(u)+d_{F'_2}(u)\leq 2$. By similar arguments as the above two paragraphs (just exchanging $u$ and $v$ among those words), we get the desired edge-partition.

If $d_G(u)\geq 4$, then $\alpha-5\geq \alpha-1-d_G(u)\geq d_G(v)-1=d_{G'}(v)=d_{F'_1}(v)+d_{F'_2}(v)+d_{H'}(v)=d_{F'_1}(v)+d_{F'_2}(v)+\alpha-5$, which implies $d_{F'_1}(v)=d_{F'_2}(v)=0$ and $d_G(u)=4$. Hence $d_{F'_1}(u)+d_{F'_2}(u)\leq 3$ and we assume, without loss of generality, that $d_{F'_1}(u)\leq 1$.
Let $H=H'$, $F_1=F'_1+uv$ and $F_2=F'_2$. Clearly, $\Delta(H)=\Delta(H')\leq \alpha-5$ and for any $v\in V(F_2)$, $d_{F_{2}}(v)=d_{F'_{2}}(v)\leq \max\{2,\lceil\frac{d_{G'}(v)-\alpha+6}{2}\rceil\}\leq \max\{2,\lceil\frac{d_{G}(v)-\alpha+6}{2}\rceil\}$. Moreover, $F_1$ is a forest so that $d_{F_1}(v)=1$, $d_{F_1}(u)\leq 2$ and $d_{F_1}(w)=d_{F'_1}(w)\leq \max\{2,\lceil\frac{d_{G'}(w)-\alpha+6}{2}\rceil\}\leq \max\{2,\lceil\frac{d_G(w)-\alpha+6}{2}\rceil\}$ for any $w\in V(F'_1)\setminus \{v\}$.

\textbf{Case 3.} \emph{$G$ contains a 2-alternating cycle $C=v_{0}v_{1}\cdots v_{2n-1}v_{0}$, where $n\geq2$, such that $d_{G}(v_{0})=d_{G}(v_{2})=\cdots=d_{G}(v_{2n-2})=2$.}

Let $G'=G-E(C)$. By the minimality of $G$, $G'$ has an edge-partition into two forests $F_{1}^{'},F^{'}_{2}$ and a subgraph $H'$ such that $\Delta(H')\leq \alpha-5$ and $d_{F'_{i}}(v)\leq \max\{2,\lceil\frac{d_{G'}(v)-\alpha+6}{2}\rceil\}$ for any $v\in V(F_i)$ with $i=1,2$.

Let $H=H'$, $F_1=F'_1\cup \{v_1v_2,v_3v_4,\ldots,v_{2n-1}v_0\}$ and $F_2=F'_2\cup \{v_0v_1,v_2v_3,\ldots,v_{2n-2}v_{2n-1}\}$.
Clearly, $\Delta(H)=\Delta(H')\leq \alpha-5$ and $F_1,F_2$ are forests. For each integer $i\in\{1,3,\cdots,2n-1\}$, $d_G(v_i)+d_G(v_{i+1})=d_G(v_i)+2\geq \alpha+1$ (otherwise we come bake to Case 2), which implies that $d_{G'}(v_i)=d_G(v_i)-2\geq \alpha-3$. Therefore, for integers $i\in\{1,3,\cdots,2n-1\}$ and $j\in \{1,2\}$, $d_{F'_j}(v_i) \leq \max\{2,\lceil\frac{d_{G'}(v_i)-\alpha+6}{2}\rceil\}=\lceil\frac{d_{G'}(v_i)-\alpha+6}{2}\rceil$,
implying that $d_{F_j}(v_i)\leq d_{F'_j}(v_i)+1 \leq \lceil\frac{d_{G'}(v_i)-\alpha+6}{2}\rceil+1=\lceil\frac{(d_{G'}(v_i)+2)-\alpha+6}{2}\rceil=\lceil\frac{d_{G}(v_i)-\alpha+6}{2}\rceil$. On the other hand,
it is easy to see that $d_{F_j}(v_i)\leq 1$ for integers $i\in\{0,2,\cdots,2n-2\}$ and $j\in \{1,2\}$, and $d_{F_j}(w)=d_{F'_j}(w)\leq \max\{2,\lceil\frac{d_{G'}(w)-\alpha+6}{2}\rceil\}\leq \max\{2,\lceil\frac{d_{G}(w)-\alpha+6}{2}\rceil\}$ for any $w\in V(F_j)\backslash V(C)$. Therefore, $d_{F_{i}}(v)\leq \max\{2,\lceil\frac{d_G(v)-\alpha+6}{2}\rceil\}$ for any $v\in V(F_i)$ with $i=1,2$ and $H\cup F_1\cup F_2$ is a desired edge-partition of $G$.
\end{proof}

\noindent \textbf{Remark.} Wang, Li, Hu and Wang \cite{WLHW} gave the proof of Theorem \ref{ddd} for $\alpha\geq 7$. Here we improve it to the case $\alpha\geq 5$. Actually,
the idea of the proof of Theorem \ref{ddd} comes from the proofs of \cite[Theorem 1]{WLHW}, \cite[Theorem 2]{W} and \cite[Theorem 5]{WHW}, but here we involve a  simplification, especially when we consider the third case. Therefore, we believe that it is an easier proof. 

\vspace{3mm} It is well-known that every $K_4$-minor-free graph $G$ has a vertex of degree at most 1, or contains either an edge $uv$ such that $d_{G}(u)+d_{G}(v)\leq 6$ or a 2-alternating cycle of length 4, see \cite[Lemma 2.1]{JMT}. Therefore, every $K_4$-minor-free graph belongs to the graph class $\mathcal{G}_6$, and thus the following theorem is an immediate corollary from Theorem \ref{ccc}.

\begin{thm}\label{eee}
Every $K_4$-minor-free graph with maximum degree $\Delta$ can be edge-partitioned into two forests $F_1$, $F_2$ and a subgraph $H$ such that $\Delta(F_i)\leq \max\{2,\lceil\frac{\Delta}{2}\rceil\}$ for $i=1,2$ and $\Delta(H)\leq 1$ (i.e., $H$ is a matching).
\end{thm}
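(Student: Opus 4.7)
The plan is to obtain Theorem~\ref{eee} as an immediate corollary of Theorem~\ref{ccc}, applied with the specific value $\alpha=6$. The only work to do is to check that the class of $K_4$-minor-free graphs fits inside the abstract class $\mathcal{G}_6$ defined earlier in the paper.

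First I would verify the hereditary property: being $K_4$-minor-free is closed under taking subgraphs, because any minor of a subgraph of $G$ is also a minor of $G$, so no subgraph of a $K_4$-minor-free graph can contain $K_4$ as a minor. Next I would invoke the cited structural result (Lemma~2.1 of \cite{JMT}): every $K_4$-minor-free graph $G$ either has a vertex of degree at most $1$, or contains an edge $uv$ with $d_G(u)+d_G(v)\leq 6$, or contains a 2-alternating 4-cycle. This is exactly the defining property of $\mathcal{G}_\alpha$ with $\alpha=6$, so the class of $K_4$-minor-free graphs is contained in $\mathcal{G}_6$.

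With these two facts in hand, Theorem~\ref{ccc} applied to any $K_4$-minor-free graph $G$ with maximum degree $\Delta$ produces an edge-partition into two forests $F_1,F_2$ and a subgraph $H$ satisfying
\[
\Delta(F_i)\leq \max\left\{2,\Lceil\tfrac{\Delta-6+6}{2}\Rceil\right\}=\max\left\{2,\Lceil\tfrac{\Delta}{2}\Rceil\right\}
\quad\text{for }i=1,2,
\]
and $\Delta(H)\leq 6-5=1$, so $H$ is a matching. This is precisely the statement of Theorem~\ref{eee}.

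There is no real obstacle here: the hard work has been carried out in the proof of Theorem~\ref{ddd}, and the structural lemma needed to certify membership in $\mathcal{G}_6$ is already available in the literature. The only care needed is the correct arithmetic matching $\alpha=6$ with the bounds in Theorem~\ref{ccc}, so that both $\lceil(\Delta-\alpha+6)/2\rceil$ collapses to $\lceil\Delta/2\rceil$ and $\alpha-5$ collapses to $1$.
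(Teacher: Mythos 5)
Your proposal is correct and follows exactly the paper's route: certify that $K_4$-minor-free graphs form a hereditary subclass of $\mathcal{G}_6$ via Lemma~2.1 of \cite{JMT}, then specialize Theorem~\ref{ccc} to $\alpha=6$. The arithmetic $\lceil(\Delta-6+6)/2\rceil=\lceil\Delta/2\rceil$ and $6-5=1$ matches the paper's derivation, and your explicit check of the hereditary property is a small but welcome addition the paper leaves implicit.
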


Theorem \ref{eee} is in accordance with \cite[Theorem 18]{WWW}, which was proved by Wang, Wang and Wang using similar arguments as proving Theorems \ref{aaa} and \ref{bbb} in the references \cite{WHW} and \cite{W}, respectively.


\bibliographystyle{srtnumbered}
\bibliography{mybib}

\end{document}